\newtheorem{thm}{Theorem}[section]
\newtheorem{lem}[thm]{Lemma}
\newtheorem{prop}[thm]{Proposition}
\newtheorem{cor}[thm]{Corollary}
\newtheorem{example}{Example}
\newcommand{\RR}{\mathbb{R}} 
\newcommand{\ZZ}{\mathbb{Z}} 
\newcommand{\RNum}[1]{({\uppercase\expandafter{\romannumeral #1\relax}})}
\newcommand{\norm}[1]{\left\lVert#1\right\rVert}
\newcommand{\tr}[1]{{\rm trace \,}}
\begin{document}
\title {A Non-Degenerate Perturbation of the Assignment Polytope and its Application to Graph Matching}
\author{Kevin M. Byrnes
\thanks{E-mail:\texttt{kbyrnes2@jhu.edu}}
\thanks{Submitted to Discrete Mathematics, Algorithms and Applications}}
\affil{T. Rowe Price, 100 E. Pratt St., Baltimore MD 21202}
\maketitle

\begin{abstract}
We consider maximizing a continuous convex function over the assignment polytope.  Such problems arise in Graph Matching (the optimization version of Graph Isomorphism) and Quadratic Assignment problems.  In the typical case of maximizing a convex function over a polytope the problem can be solved by using a simplicial algorithm such as Tuy's method or the Falk-Hoffman method, but these algorithms require that the underlying polytope be non-degenerate, which is not the case for the assignment polytope.  In this note we show how a simple perturbation scheme can be used to create a ``surrogate problem'' that is both non-degenerate and combinatorially equivalent to the original problem.  We further provide an explicit construction of a surrogate problem that is non-degenerate and combinatorially equivalent to the Graph Matching problem, when the latter is posed as a convex maximization problem.  By constructing a surrogate problem that is known a priori to be non-degenerate and combinatorially equivalent to Graph Matching we resolve an ``open issue'' of solving Graph Matching via convex maximization first raised by Maciel. 
\end{abstract}

\section {Introduction}
In this note we consider maximizing a convex function over the assignment polytope.  Such problems frequently arise in combinatorial optimization, for example the Graph Matching Problem (GM) \footnote{Here by Graph Matching we refer to the optimization version of the Graph Isomorphism problem, and not the problem of finding a maximum cardinality subset of edges such that no two edges in the subset share a vertex.}\cite{GM} and the Quadratic Assignment Problem (QAP) \cite{Sherali}.  Typically convex maximization problems can be solved by first observing that a maximizer of the objective function must lie at a vertex of the feasible polytope, and then using a simplicial algorithm such as Tuy's method \cite{Tuy}, or the Falk-Hoffman method \cite{FalkHoffman} to identify an optimal vertex.

One difficulty that arises in applying these algorithms is that they assume that the feasible polytope is non-degenerate, which is frequently not the case in combinatorial problems.  However the feasible region of a combinatorial problem can sometimes be perturbed so as to be non-degenerate.  This is the case with the assignment polytope, as was first shown by Orden \cite{Orden} and later researched by Liu \cite{Liu}. 

This suggests the strategy of perturbing the feasible region of the combinatorial problem (i.e. the assignment polytope) so as to become non-degenerate and then solving the resulting ``surrogate problem''.  Here we identify the range of values for a perturbation such that the optimal basis for the surrogate problem is also an optimal basis for the original problem (i.e. the problems are \emph{equivalent}).  This solves a practical problem of establishing a priori when the surrogate problem is equivalent without having to make the perturbation so small so as to invite numerical precision errors or to vastly increase the problem encoding size.

Finally, we demonstrate how our findings can be applied to the Graph Matching problem by first posing it as a convex maximization problem and then explicitly constructing an equivalent surrogate problem.  Although the optimality properties of the assignment polytope under a perturbation have been extensively studied in the case of a linear objective function (see \cite{Orlin}), we believe that the extension of this approach to the non-linear case, and in particular to GM, is novel.

\section {A Non-Degenerate Perturbation and the Optimal Basis Theorem}
\subsection{The Birkhoff Polytope and a Non-Degenerate Perturbation}
We consider the following optimization problem
\begin{equation*}
\begin{array}{llll}
& \text{maximize} & f(x) &\\
\RNum{1} &\text{subject to} &\displaystyle\sum\limits_{j=1}^{n} x_{ij} = 1, &i=1,\ldots,n\\
& &\displaystyle\sum\limits_{i=1}^{n} x_{ij} = 1&j=1,\ldots,n\\
& & x_{ij}\ge 0 & i=1,\ldots,n,\text{ } j=1,\ldots,n
\end{array}
\end{equation*}
where $f:\RR^{n\times n} \rightarrow \RR$ is convex and continuous.  Problem \RNum{1} generalizes QAP (canonically a \emph{minimization} problem) with negative coefficients and is therefore NP-hard \cite{GJ}.  However, we can still solve \RNum{1} using a simplicial algorithm such as Tuy's method \cite{Tuy} or the Falk-Hoffman method \cite{FalkHoffman}.  Unfortunately these algorithms assume that the feasible polytope is non-degenerate, which is not the case in \RNum{1}.  There are variants of these algorithms which do handle degeneracy, such as Meyer's subdivision variant of Tuy's method \cite{Meyer}, or Hoffman's generalization of the Falk-Hoffman method \cite{Hoffman}, but they either require introducing a branching framework (eliminating much of the simplicity of Tuy's and Falk and Hoffman's methods), or lose the guarantee of finite convergence.  Here we consider a different approach, introducing a non-degenerate surrogate problem that is equivalent to the original problem.

To construct the surrogate problem we begin with the following two results, to which we will often refer.

\begin{thm} [Birkhoff-von Neumann Theorem]
\label{BirkhoffPf}
The extreme points of the polytope $P$ defined by the linear system:
\begin{equation*}
\begin{array}{lll}
\sum_{j=1}^n x_{ij}=1 \text{ for } i=1,\ldots,n,
&\sum_{i=1}^n x_{ij}= 1 \text{ for } j=1,\ldots,n,
&x_{ij}\ge0
\end{array}
\end{equation*}
are precisely the set of $n\times n$ permutation matrices.
\end{thm}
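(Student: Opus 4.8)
The plan is to establish the two inclusions separately: every $n\times n$ permutation matrix is an extreme point of $P$, and conversely every extreme point of $P$ is a permutation matrix. The first inclusion is the easy half. Let $M$ be a permutation matrix and suppose $M=\tfrac12(A+B)$ with $A,B\in P$. Wherever $M_{ij}=0$ we must have $A_{ij}=B_{ij}=0$, since both entries are nonnegative and average to zero; hence $A$ and $B$ are supported on the $n$ positions where $M$ has a $1$. Because each row and column of $A$ sums to $1$ and has only a single admissible nonzero position there, we conclude $A=B=M$, so $M$ admits no nontrivial representation as a convex combination of points of $P$ and is therefore extreme.

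For the converse I would argue by contrapositive: if $x\in P$ is \emph{not} a permutation matrix, I will exhibit two distinct points $x^{+},x^{-}\in P$ with $x=\tfrac12(x^{+}+x^{-})$, showing $x$ is not extreme. Since $x$ is not $0/1$-valued, some entry satisfies $0<x_{ij}<1$. Form the bipartite graph $G$ on vertex classes $\{r_1,\dots,r_n\}$ and $\{c_1,\dots,c_n\}$, placing an edge $r_ic_j$ exactly when $0<x_{ij}<1$. The key observation is that every vertex of $G$ has degree $0$ or at least $2$: if row $i$ contains one strictly fractional entry, then because the row sums to the integer $1$ it must contain a second fractional entry, and likewise for every column. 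A finite graph in which every non-isolated vertex has degree at least two contains a cycle, and since $G$ is bipartite this cycle has even length, say $r_{i_1}c_{j_1}r_{i_2}c_{j_2}\cdots r_{i_k}c_{j_k}r_{i_1}$.

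The construction then proceeds by alternately perturbing the fractional entries along this cycle. Define $x^{\pm}$ to agree with $x$ off the cycle, and on the cycle set $x^{\pm}_{i_t j_t}=x_{i_t j_t}\pm\varepsilon$ and $x^{\pm}_{i_{t+1} j_t}=x_{i_{t+1} j_t}\mp\varepsilon$, so that each participating row and column receives exactly one $+\varepsilon$ and one $-\varepsilon$. Choosing $\varepsilon>0$ smaller than the distance from every cycle entry to both $0$ and $1$ keeps all entries in $[0,1]$, and the alternating pattern preserves every row and column sum, so $x^{\pm}\in P$; they are distinct and average to $x$. The main obstacle, and the step deserving the most care, is the existence of the cycle: I must justify the degree-at-least-two claim rigorously (integrality of the row and column sums is precisely what forbids a lone fractional entry) and confirm that the alternating $\pm\varepsilon$ assignment is globally consistent around an even cycle. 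If one prefers to bypass the graph-theoretic argument, an alternative is to invoke total unimodularity of the constraint incidence matrix, which forces every vertex of $P$ to be integral and hence, being doubly stochastic, a permutation matrix; but the cycle argument is self-contained and I would present it as the primary route.
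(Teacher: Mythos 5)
Your proof is correct, but it is not the paper's approach for the simple reason that the paper has no approach: its ``proof'' of Theorem~\ref{BirkhoffPf} is just a citation to Birkhoff and von Neumann. What you supply is the classical self-contained argument, and both halves hold up. The easy direction (a permutation matrix $M$ cannot be a nontrivial midpoint, since nonnegativity forces $A$ and $B$ to vanish where $M$ does and the row sums then pin them to $M$) is airtight. In the converse, the two steps you single out as delicate are exactly the right ones, and both succeed: a row cannot contain exactly one fractional entry, because the remaining entries of that row are in $\{0\}\cup(0,1)\cup\{1\}$ and would have to sum to the non-integer $1-x_{ij}$, so at least one of them is itself fractional (same for columns); and the $\pm\varepsilon$ alternation closes up consistently around the cycle precisely because every cycle in a bipartite graph has even length, your indexing giving each participating row and column exactly one $+\varepsilon$ and one $-\varepsilon$, so all constraint sums are preserved and $x=\tfrac12(x^{+}+x^{-})$ with $x^{+}\neq x^{-}$. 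One cosmetic remark: bounding $\varepsilon$ below the distance of each cycle entry to $1$ is not actually needed, since entries of a nonnegative matrix with unit row sums cannot exceed $1$; staying away from $0$ suffices. Your alternative route via total unimodularity is also valid and is arguably more in the spirit of this particular paper, which leans on total unimodularity of the transportation constraint matrix throughout Section~2 (Props.~\ref{infeasNegative} and \ref{basicSolnDist}); deriving vertex integrality from TU and then observing that an integral doubly stochastic matrix is a permutation matrix would let this theorem share machinery with the rest of the paper, whereas your cycle argument buys elementary self-containedness at the cost of a separate graph-theoretic lemma. Either way, the proposal is sound.
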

\begin{proof}
See \cite{Birkhoff}, \cite{vonNeumann}.
\end{proof}

\begin{lem} [Liu, 2013]
\label{liuProof}
Let $t\in (0,\frac{1}{n})$, then the polytope defined by:
\begin{equation*}
\begin{array}{lll}
\sum_{j=1}^n x_{ij}=1-t \text{ for } i=1,\ldots,n,
&\sum_{i=1}^n x_{ij}= 
\begin{cases}
1, & \text{if } j=1,\ldots,n-1\\
1-nt, &\text{if } j=n
\end{cases},
&x_{ij}\ge0
\end{array}
\end{equation*}
is non-degenerate.
\end{lem}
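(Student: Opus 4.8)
The plan is to recognize the feasible region as a transportation polytope and then reduce non-degeneracy to a purely arithmetic statement about partial sums. Writing $a_i = 1-t$ for the $i$-th ``supply'' and $b_j = 1$ for $j=1,\dots,n-1$, $b_n = 1-nt$ for the ``demands'', the equality constraints are exactly the conservation constraints of a transportation problem on the complete bipartite graph $K_{n,n}$ whose left vertices carry the supplies $a_i$ and whose right vertices carry the demands $b_j$. A first routine check is consistency: $\sum_i a_i = n(1-t) = (n-1)+(1-nt) = \sum_j b_j$, so the polytope is nonempty and the $2n$ equality constraints have rank $2n-1$.

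First I would reduce non-degeneracy to a partial-sum condition. Every vertex is a basic feasible solution whose positive support is an acyclic subgraph (forest) of $K_{n,n}$, contained in a basis forming a spanning tree $\mathcal{T}$ with $2n-1$ edges; the vertex is degenerate precisely when some tree edge $e\in\mathcal{T}$ carries zero flow. Deleting such an $e$ splits $\mathcal{T}$ into two subtrees, and letting $R_1, C_1$ denote the rows and columns lying in one of them, flow conservation across the cut forces $\sum_{i\in R_1} a_i = \sum_{j\in C_1} b_j$, where $(R_1,C_1)$ is neither empty nor the full index set. Hence it suffices to prove that no such proper partial-sum equality can occur: for all $I,J\subseteq\{1,\dots,n\}$ other than the trivial pairs $(\emptyset,\emptyset)$ and (all, all), one has $\sum_{i\in I}a_i \neq \sum_{j\in J}b_j$.

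The core of the argument is this arithmetic verification, which I would carry out by a short case split on whether the last column belongs to $J$. Put $p=|I|$ and $q=|J|$, so $\sum_{i\in I}a_i = p(1-t)$. If $n\notin J$ then $\sum_{j\in J}b_j = q$, and $p(1-t)=q$ rearranges to $pt = p-q$; since $0\le pt\le nt<1$ (using $t<\tfrac1n$) while $p-q$ is an integer, we are forced into $pt=0$, hence $p=0$ and then $q=0$, the trivial pair. If $n\in J$ then $\sum_{j\in J}b_j = q-nt$, and the equality rearranges to $p-q = (p-n)t$; now $(p-n)t\in(-nt,0]\subseteq(-1,0]$, so the integer $p-q$ must vanish, forcing $p=n$ and $q=n$, again trivial. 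In both cases only the excluded trivial pairs survive, so no proper partial-sum equality exists and the polytope is non-degenerate. The genuinely substantive step is exactly this case analysis: everything turns on $nt<1$, which guarantees that the fractional contributions $pt$ and $(p-n)t$ can never cancel against the integer difference $p-q$ except in the forbidden trivial cases, and this is precisely why the open interval $(0,\tfrac1n)$ is the correct range. The reduction to partial sums via the spanning-tree cut, by contrast, is standard transportation-polytope theory and I would state it with only a brief justification.
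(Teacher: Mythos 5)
Your proof is correct; note that the paper offers no argument of its own here, deferring entirely to Liu's Lemma 4.4, so you have in effect reconstructed the missing proof rather than diverged from one. The route you take is the standard one for transportation polytopes, and it is the right one: a basis is a spanning tree of $K_{n,n}$ with $2n-1$ edges, a zero-valued basic edge splits the tree, and balancing flow across the cut yields a proper partial-sum equality $\sum_{i\in I}a_i=\sum_{j\in J}b_j$; non-degeneracy thus reduces to showing no such equality exists, and your case split on whether $n\in J$ handles this cleanly, with the whole argument pivoting (correctly) on the fact that $nt<1$ prevents the fractional terms $pt$ and $(p-n)t$ from matching a nonzero integer $p-q$. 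Two minor points of hygiene, neither affecting validity. First, in the case $n\in J$ the quantity $(p-n)t$ ranges over the closed interval $[-nt,0]$, not $(-nt,0]$, since $p=0$ is permitted; the endpoint $-nt$ still exceeds $-1$, so the forced conclusion $p-q=0$ survives, but the interval should be stated correctly. Second, it is worth flagging explicitly that $t<\frac{1}{n}$ gives $b_n=1-nt>0$ (and $a_i=1-t>0$), since strict positivity of all supplies and demands is what rules out cuts in which one component of the split tree is a single vertex (e.g.\ $I=\emptyset$, $J=\{j\}$) --- your arithmetic does exclude these pairs, but the reader should see that the cut argument genuinely produces only pairs your arithmetic covers. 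A side benefit of your approach over the paper's bare citation is that it makes transparent why $(0,\frac{1}{n})$ is exactly the right parameter range: at $t=\frac{1}{n}$ the demand $b_n$ vanishes and degeneracy reappears, while at $t=0$ one recovers the (highly degenerate) Birkhoff polytope itself.
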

\begin{proof}
See \cite{Liu} Lemma 4.4.
\end{proof}

Note that the polytope described in Thm. \ref{BirkhoffPf} is frequently referred to as the assignment polytope, or the Birkhoff polytope (we adopt the latter convention).  The properties of these polytopes were extensively studied by Brualdi and Gibson in an influential series of papers beginning with \cite{BG1}.  

Prior to that, Order \cite{Orden} studied these polytopes from an optimization perspective, in particular how to resolve degeneracy when solving a transportation problem.  He introduced a perturbation similar to the one in Lemma \ref{liuProof}, but with ``$1-t$'' and ``$1-nt$'' terms replaced by ``$1+t$''  and ``$1+nt$'' respectively.  This note was motivated by \cite{Liu} and assumes a perturbation as given above, but in fact our analysis holds for Orden's perturbation as well with minor adjustments.

The proposed form for the surrogate problem is given in \RNum{2}.  The final equality constraint is omitted as it is linearly dependent.
\begin{equation*}
\begin{array}{llll}
& \text{maximize} & f(x) &\\
\RNum{2} &\text{subject to} &\displaystyle\sum\limits_{j=1}^{n} x_{ij} = 1-t, &i=1,\ldots,n\\
& &\displaystyle\sum\limits_{i=1}^{n} x_{ij} = 1&j=1,\ldots,n-1\\
& & x_{ij}\ge 0 & i=1,\ldots,n,\text{ } j=1,\ldots,n
\end{array}
\end{equation*}

Clearly for some sufficiently small $t>0$ \RNum{2} is non-degenerate and has an optimal basis that is also an optimal basis of \RNum{1} (as the optimal solution is a continuous function of the right hand side of the constraints), and thus is equivalent to \RNum{1}.  At this point one might (rightfully) ask whether any value of $t$ for which \RNum{2} is non-degenerate would also result in a problem that is equivalent to \RNum{1}.  As the following numeric example shows, this is not necessarily the case.
\begin{example}
\label{exPerturbedDiffOpt}
Consider the problem:
\begin{equation*}
\begin{array}{ll}
\text{maximize} & f(x) = .0006(x_{12}+x_{21}) + 1.001 x_{11}^2 + x_{12}^2 + x_{21}^2 + 1.001 x_{22}^2 \\
\text{subject to} &x_{11}+x_{12}=1-t \\
& x_{21}+x_{22}=1-t \\
& x_{11}+x_{21}=1 \\
& x_{ij}\ge 0 \forall \text{ }i,j
\end{array}
\end{equation*}
When $t=\frac{1}{2}-.0005$ the unique optimal basic feasible solution is $x_{11}=.4995, x_{12}=.001, x_{21}=.5005,x_{22}=0$.  Clearly $x_{22}$ is non-basic here.  But when $t=0$ (i.e. the problem is unperturbed) the unique optimal basic feasible solution is $x_{11}=1, x_{12}=0, x_{21}=0, x_{22} =1$, which has $x_{22}$ basic\footnote{To verify that the perturbed problem has a unique optimal basic feasible solution it is important not to truncate the values of $f$.}.
\end{example}

Therefore our goal is to identify a range of values for $t$ so that it is known \emph{a priori} if \RNum{2} is equivalent to \RNum{1}.  Later we shall see that this resolves a practical problem in solving GM via concave maximization first raised in \cite{MacielThesis}.  

Before proceeding, we note that one might also ask whether existing sensitivity analysis techniques for linear programming, i.e. the 100\% rule of Bradley et al. \cite{Bradley} or the tolerance approach of Wendell \cite{Wendell} could easily be adapted to find the range of values for $t$.  The answer is ``no'' for two reasons:  first (and most obviously) the problem at hand is \emph{nonlinear}, and second these techniques consider when the current basis remains optimal after a perturbation.  As the following example shows, it is possible (due to degeneracy) for an optimal basis of \RNum{1} to be infeasible after perturbing \RNum{1} to \RNum{2}, even though \RNum{2} remains equivalent (in the way we defined this term earlier) to \RNum{1}.  This is because one, but not all, of the many optimal bases associated with the optimal vertex in \RNum{1} remains an optimal basis for \RNum{2}.

\begin{example}
\label{exPerturbedInfeas} 
Consider the problem:
\begin{equation*}
\begin{array}{lll}
\text{maximize} & f(x) =1.01 \sum_{i=1}^3 x_{ii}^2 + \sum_{i=1}^3 \sum_{j\neq i} x_{ij}^2 &\\
\text{subject to} &\sum_{j=1}^3 x_{ij}=1-t & i=1,\ldots,3\\
&\sum_{i=1}^3 x_{ij}=1 & j=1,2\\
& x_{ij}\ge 0 &\forall \text{ }i,j
\end{array}
\end{equation*}
When $t=0$ (the problem is unperturbed) there is an optimal solution with basic variables $x_{11}, x_{22}, x_{23}, x_{31}, x_{33}$.  This corresponds to the unique optimal \emph{vertex} $x^*$.  But the same basis is infeasible when the problem is perturbed with $t=.001$.  However, the solution whose basic variables are $x_{11}, x_{22}, x_{31}, x_{32}, x_{33}$ is optimal both for $t=0$ (where this basic solution also corresponds to $x^*$) and $t=.001$.  In fact, this is the unique optimal basic feasible solution for $t=.001$, thus the problems are equivalent but not all optimal bases are feasible in the perturbed problem.
\end{example}

\subsection{The Optimal Basis Theorem}
To begin with, observe that the constraints used in defining $P$ (the feasible region of \RNum{1}) are exactly the constraints used in what Dantzig and Thapa \cite{dantzigThapa} call the \emph{classical transportation problem}.

\begin{prop}
Let $Ax=b, x\ge 0$ be the constraints for the classical transportation problem. Then $A$ is a totally unimodular matrix.
\end{prop}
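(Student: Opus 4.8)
The plan is to show that the constraint matrix $A$ of the classical transportation problem is totally unimodular (TU), meaning every square submatrix has determinant in $\{-1,0,1\}$. The key structural observation is that each column of $A$ corresponds to a single variable $x_{ij}$, and that variable appears in exactly two constraints: the $i$-th row-sum constraint and the $j$-th column-sum constraint. Hence every column of $A$ has exactly two nonzero entries, both equal to $1$. The cleanest route is to invoke a standard sufficient condition for total unimodularity (e.g. the Heller--Tompkins/Hoffman--Kruskal criterion): if a $0/1$ matrix has at most two nonzeros per column, and the rows can be partitioned into two sets $R_1, R_2$ such that for each column with two nonzeros, the two nonzero entries lie in rows belonging to different sets, then $A$ is TU.

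First I would write $A$ explicitly as the incidence-type matrix whose rows are indexed by the $2n$ equality constraints (the $n$ row-sums and the $n$ column-sums) and whose columns are indexed by the $n^2$ variables $x_{ij}$, observing that the entry is $1$ precisely in the two rows corresponding to constraint $i$ (a row-sum) and constraint $n+j$ (a column-sum). Next I would take the natural bipartition $R_1 = \{\text{the } n \text{ row-sum constraints}\}$ and $R_2 = \{\text{the } n \text{ column-sum constraints}\}$. Since each variable's two $1$-entries always fall one in $R_1$ and one in $R_2$, the hypotheses of the criterion are met, and TU follows immediately.

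If instead I wanted a self-contained argument rather than citing the criterion, I would prove it by induction on the size $k$ of a square submatrix $B$, showing $\det B \in \{-1,0,1\}$. The base case $k=1$ is trivial since all entries are $0$ or $1$. For the inductive step, if some column of $B$ has no nonzeros, then $\det B = 0$; if some column has a single nonzero, I expand the determinant along that column and reduce to a $(k-1)\times(k-1)$ submatrix of the same type, applying the inductive hypothesis. The only remaining case is when every column of $B$ has exactly two $1$'s; then, using the bipartition, the sum of the $R_1$-rows of $B$ equals the all-ones vector (one $1$ per column) and likewise the sum of the $R_2$-rows, so the sum of all $R_1$-rows minus the sum of all $R_2$-rows is the zero vector. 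This linear dependence among the rows forces $\det B = 0$.

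The main obstacle, and the step requiring the most care, is the final case above: correctly identifying the linear dependence that arises when every column of the submatrix has two nonzeros. One must be careful that this dependence is only guaranteed when the submatrix's columns all have exactly two entries (a column inherited with only one of its two entries inside the submatrix falls into the single-nonzero case instead). Handling this case hinges entirely on the bipartition of the rows into row-sum and column-sum constraints, which is exactly the feature that distinguishes transportation-type incidence matrices from general $0/1$ matrices. Once this is set up cleanly, the induction goes through without further difficulty.
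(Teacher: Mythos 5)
Your proof is correct. Note that the paper itself does not actually prove this proposition---its ``proof'' is a one-line citation to Dantzig and Thapa---so you have supplied the argument the paper delegates to the literature, and it is the standard one: the constraint matrix of the transportation problem is the incidence matrix of a bipartite structure (each variable $x_{ij}$ appears with a $1$ in exactly one row-sum constraint and one column-sum constraint), and the Heller--Tompkins/Hoffman--Kruskal bipartition criterion applies immediately with $R_1$ the row-sum constraints and $R_2$ the column-sum constraints. Your self-contained inductive version is also sound, and you correctly flag the one genuinely delicate point: in the case where every column of the square submatrix $B$ has exactly two nonzeros, each such column necessarily has one $1$ among the $R_1$-rows of $B$ and one among the $R_2$-rows (since the original column has exactly one entry in each block), so the sum of the $R_1$-rows of $B$ minus the sum of the $R_2$-rows is zero and $\det B = 0$; columns that lose one of their two entries when passing to the submatrix are correctly routed to the single-nonzero expansion case. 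One minor remark: the classical transportation problem generally has $m$ supply and $n$ demand constraints rather than the square $n \times n$ case this paper uses, but your argument covers the general rectangular case without modification, so nothing is lost.
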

\begin{proof}
See \cite{dantzigThapa}. 
\end{proof}

Obviously if $A$ is totally unimodular then so is any subset of the rows of $A$ (e.g. the constraints of \RNum{1} with the final, linearly dependent, constraint omitted).

\begin{prop}
\label{infeasNegative}
Consider the polytope defined by $\{x|Ax=b, x\ge 0\}$ where $A$ is totally unimodular and of full row rank and $b$ is integer. Let $\mathcal{B}=\{B_{(1)},\ldots ,B_{(m)}\}$ be a basis defined from $A$ (i.e. $\mathcal{B}=\{A_{\pi(1)},\ldots,A_{\pi(m)}\}$), and let $\hat{x}$ denote the attendant basic solution. If $\hat{x}$ is not feasible, then $\hat{x}_j \le -1$ for some $j\in \{1,\ldots,n\}$.
\end{prop}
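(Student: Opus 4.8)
The plan is to establish a single key fact---that the basic solution $\hat{x}$ has integer entries---from which the conclusion is immediate. Indeed, an infeasible basic solution must violate some constraint, and since $A\hat{x}=b$ holds by construction, the violation can only occur in the sign constraints; and an integer that is negative is automatically at most $-1$.

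First I would set up the structure of $\hat{x}$. Writing $B$ for the $m\times m$ matrix whose columns are the basis columns $A_{\pi(1)},\ldots,A_{\pi(m)}$, the defining property of a basic solution is that the non-basic entries of $\hat{x}$ are zero while the basic entries satisfy $B\hat{x}_B=b$. Since $A$ has full row rank and $\mathcal{B}$ is a basis, $B$ is nonsingular, so $\hat{x}_B=B^{-1}b$ is well-defined.

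The crux is to show $\hat{x}_B\in\ZZ^m$. Because $A$ is totally unimodular, every nonsingular square submatrix of $A$---in particular $B$---has determinant $\pm 1$. By Cramer's rule the $k$-th basic entry equals $\det(B^{(k)})/\det(B)$, where $B^{(k)}$ is $B$ with its $k$-th column replaced by $b$. The entries of $B^{(k)}$ are all integers, since the entries of $A$ lie in $\{-1,0,1\}$ and $b$ is integer by hypothesis, so $\det(B^{(k)})\in\ZZ$; dividing by $\det(B)=\pm 1$ keeps it an integer. Hence every basic entry is an integer, and as the non-basic entries vanish, $\hat{x}\in\ZZ^n$.

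To finish: if $\hat{x}$ is not feasible then, because it already satisfies $A\hat{x}=b$, some entry must fail nonnegativity, i.e.\ $\hat{x}_j<0$. As $\hat{x}_j\in\ZZ$, this forces $\hat{x}_j\le -1$, completing the argument. I do not expect any genuine obstacle here: the one substantive ingredient is the passage from total unimodularity to integrality of $B^{-1}b$ via Cramer's rule, and the remaining steps are routine bookkeeping about basic solutions. The only point worth stating carefully is that the non-basic entries are exactly zero---hence integral and nonnegative---so that infeasibility is necessarily caused by a negative \emph{basic} entry.
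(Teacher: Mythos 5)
Your proposal is correct and follows the same route as the paper: the paper's proof is exactly the Cramer's rule argument that total unimodularity of $A$ and integrality of $b$ force $\hat{x}\in\ZZ^n$, from which the conclusion is immediate. You have simply spelled out the details (nonsingularity of $B$, $\det(B)=\pm 1$, and that infeasibility must come from a negative basic entry) that the paper leaves implicit.
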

\begin{proof}
By Cramer's Rule, $\hat{x}_j\in \ZZ$ for $j=1,\ldots, n$ since $A$ is totally unimodular and $b\in \ZZ$. The result follows immediately.
\end{proof}

The total unimodularity of the constraint matrix allows us to bound the absolute change in the value of each component of a basic solution when the right hand side is perturbed.  In view of Prop. \ref{infeasNegative} this means that if the perturbation is small enough, then a basis that yields a feasible solution to the unperturbed problem will also yield a feasible solution to the perturbed problem.

\begin{prop}
\label{basicSolnDist}
Let $A\in \RR^{m\times n}$ be totally unimodular and of full row rank, and let $b\in \ZZ^m$. Let $\gamma \in \RR^m$ such that $|\gamma_i|<\frac{\Gamma}{m} \text{ for }i=1,\ldots, m$, and let $b'=b-\gamma$. Let $\mathcal{B}=\{B_{(1)},\ldots,B_{(m)}\}=\{A_{\pi(1)},\ldots,A_{\pi(m)}\}$ be a basis defined from $A$, and let $\hat{x}$ and $x'$ be the basic solutions defined by 
\begin{equation*}
\begin{array}{llcll}
\hat{x}_j=
&\begin{cases}
(B^{-1}b)_i, & \text{if } j=\pi_{(i)}\\
0, &\text{otherwise }
\end{cases}
&
&x'_j=
&\begin{cases}
(B^{-1}b')_i, &\text{if }j=\pi_{(i)}\\
0,&\text{otherwise }
\end{cases}
\end{array}
\end{equation*}
then $|\hat{x}_j-x'_j|< \Gamma$.
\end{prop}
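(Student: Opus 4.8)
The plan is to split the coordinates $j$ into basic and nonbasic indices and thereby reduce the claim to a bound on a single entry of the vector $B^{-1}\gamma$. First I would dispose of the easy case: if $j\notin\{\pi(1),\ldots,\pi(m)\}$, then by definition $\hat{x}_j=x'_j=0$, so $|\hat{x}_j-x'_j|=0<\Gamma$ (note that $\Gamma>0$ is forced by the hypothesis $|\gamma_i|<\Gamma/m$, since an absolute value can be strictly bounded above only by a positive quantity). For a basic index $j=\pi(i)$, linearity of $B^{-1}$ and $b'=b-\gamma$ give $\hat{x}_j-x'_j=(B^{-1}b)_i-(B^{-1}b')_i=(B^{-1}(b-b'))_i=(B^{-1}\gamma)_i$. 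Thus the whole proposition reduces to showing $|(B^{-1}\gamma)_i|<\Gamma$ for each $i$.

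The heart of the argument, and the step I expect to be the main obstacle, is showing that every entry of $B^{-1}$ lies in $\{-1,0,1\}$. Since $B$ is a nonsingular $m\times m$ submatrix of the totally unimodular matrix $A$, we have $\det B\in\{-1,1\}$. By the adjugate formula $B^{-1}=(\det B)^{-1}\operatorname{adj}(B)$, each entry $(B^{-1})_{ik}$ equals $\pm$ a cofactor of $B$, i.e.\ $\pm$ the determinant of an $(m-1)\times(m-1)$ submatrix of $B$. Such a submatrix is itself a submatrix of $A$, so its determinant lies in $\{-1,0,1\}$ by total unimodularity; combined with $|\det B|=1$ this yields $|(B^{-1})_{ik}|\le 1$ for all $i,k$. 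This is the only place where the structural hypothesis on $A$ is used, and it is what makes the perturbation bound uniform in the choice of basis.

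Finally I would assemble the estimate. Writing $(B^{-1}\gamma)_i=\sum_{k=1}^m (B^{-1})_{ik}\gamma_k$ and applying the triangle inequality gives $|(B^{-1}\gamma)_i|\le\sum_{k=1}^m |(B^{-1})_{ik}|\,|\gamma_k|$. Let $S=\{k:(B^{-1})_{ik}\neq 0\}$; by the previous paragraph $|(B^{-1})_{ik}|=1$ for $k\in S$ and $0$ otherwise, so the sum collapses to $\sum_{k\in S}|\gamma_k|$. Since $B^{-1}$ is invertible, its $i$th row is not identically zero, whence $S\neq\emptyset$ and $1\le|S|\le m$. Using the strict per-coordinate bound $|\gamma_k|<\Gamma/m$ then gives $\sum_{k\in S}|\gamma_k|<|S|\,\Gamma/m\le m\cdot\Gamma/m=\Gamma$, which establishes $|(B^{-1}\gamma)_i|<\Gamma$ and completes the proof. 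The one subtlety worth flagging is the \emph{strictness} of the final inequality: it relies on $S$ being nonempty, so that the strict coordinatewise bound actually propagates to the sum rather than degenerating to an empty (hence vacuously non-strict) estimate.
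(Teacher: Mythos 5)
Your proof is correct and follows essentially the same route as the paper: the paper applies Cramer's rule to $x'_j$ and cofactor-expands, obtaining $\hat{x}_j-x'_j=\sum_{k=1}^m (-1)^{k+i}\gamma_k\det(\tilde{B}_{-k-i})/\det(B)$, which is precisely your $(B^{-1}\gamma)_i$ written via the adjugate, and both arguments then bound the cofactors by $1$ using total unimodularity of the submatrices of $A$ together with $|\det(B)|=1$. Your version is slightly more fastidious (explicitly handling nonbasic indices and the strictness of the final inequality via $S\neq\emptyset$, though note that $S=\emptyset$ would also be harmless since $\Gamma>0$), but there is no substantive difference in method.
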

\begin{proof}
Let $j=\pi_{(i)}$, then $x'_j=\frac{\det(\tilde{B})}{\det(B)}$, where $\tilde{B}=[A_{\pi(1)},\ldots,A_{\pi(i-1)},b',A_{\pi(i+1)},\ldots,A_{\pi(m)}]$. Using cofactor expansion,this yields $x'_j=\frac{\sum_{k=1}^m (-1)^{k+i}b'_k \det(\tilde{B}_{-k-i})}{\det(B)}$ where $\tilde{B}_{-k-i}$ is the submatrix of $\tilde{B}$ with row $k$ and column $i$ removed.  We compute $\hat{x}_j$ similarly, but with $b_k$ in place of $b'_k$. 

So $|\hat{x}_j-x'_j|=\left| \hat{x}_j-\frac{\sum_{k=1}^m (-1)^{k+i} (b_k-\gamma_k) \det(\tilde{B}_{-k-i})}{\det(B)}\right|$
=$\left| \frac{\sum_{k=1}^m (-1)^{k+i}\gamma_k \det(\tilde{B}_{-k-i})}{det(B)}\right| \le \frac{1}{|\det(B)|}\sum_{k=1}^m |\gamma_k| < \Gamma$
since $\tilde{B}_{-k-i}$ is totally unimodular as it is a submatrix of $A$ (note that $\det (B)\neq 0$ as $\mathcal{B}$ is a basis).
\end{proof}

\begin{cor}
\label{normCorr}
Let $A, b, b', \mathcal{B}, \gamma, \hat{x}, \text{and } x'$ be as defined in Prop. \ref{basicSolnDist}. Then $\norm{\hat{x}-x'}^2=\sum_j (\hat{x}_j-x'_j)^2< m \Gamma ^2$.
\end{cor}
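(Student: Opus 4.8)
The plan is to leverage Proposition \ref{basicSolnDist} directly, together with the sparsity structure of basic solutions. The crucial observation is that both $\hat{x}$ and $x'$ are \emph{basic} solutions defined from the \emph{same} basis $\mathcal{B}$, so both vanish on the $n-m$ non-basic coordinates; only the $m$ basic coordinates $\pi(1),\ldots,\pi(m)$ can contribute to the sum $\sum_j(\hat{x}_j-x'_j)^2$. This is precisely what sharpens the naive componentwise bound from $n\Gamma^2$ down to $m\Gamma^2$.

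First I would split the sum according to whether $j$ is a basic index. For each non-basic $j\notin\{\pi(1),\ldots,\pi(m)\}$ we have $\hat{x}_j=x'_j=0$ by the definitions in Prop. \ref{basicSolnDist}, so these terms vanish identically and drop out of the sum. For each basic index $j=\pi(i)$, Proposition \ref{basicSolnDist} supplies $|\hat{x}_j-x'_j|<\Gamma$, and hence $(\hat{x}_j-x'_j)^2<\Gamma^2$.

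Summing over the $m$ surviving (basic) indices then yields
\begin{equation*}
\norm{\hat{x}-x'}^2=\sum_{i=1}^m\bigl(\hat{x}_{\pi(i)}-x'_{\pi(i)}\bigr)^2<\sum_{i=1}^m\Gamma^2=m\Gamma^2,
\end{equation*}
where the strict inequality is inherited from the strict componentwise bounds: since $A$ has full row rank $m\ge 1$, the sum is nonempty, so a sum of $m$ terms each strictly below $\Gamma^2$ is strictly below $m\Gamma^2$.

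In truth there is no serious obstacle here; the result is essentially a one-line consequence of the per-component estimate once the support structure is recognized. The only point meriting (minor) attention is verifying that strictness survives the summation, which it does precisely because at least one strictly-bounded term is always present.
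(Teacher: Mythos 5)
Your proof is correct and matches the intended argument: the paper states this corollary without proof, treating it as an immediate consequence of Prop.~\ref{basicSolnDist}, and your fleshed-out version (non-basic coordinates vanish in both basic solutions since they share the basis $\mathcal{B}$, leaving $m$ terms each strictly below $\Gamma^2$) is exactly the reasoning the paper relies on. Your observation that the shared support is what yields $m\Gamma^2$ rather than the naive $n\Gamma^2$, and your check that strictness survives summation because $m\ge 1$, are both sound.
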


If the right hand side defining the Birkhoff polytope is perturbed as in Lem. \ref{liuProof}, then a basis that defines a point that is feasible for the unperturbed polytope might correspond to a point that is infeasible for the perturbed polytope.  See, for example, Ex. \ref{exPerturbedInfeas}.  However, the following result shows that for any basic feasible solution $\hat{x}$ to the original problem, there is a feasible point $x’$ of the perturbed problem that is ``close'' to $\hat{x}$ (though it may correspond to a different basis).

\begin{prop}
\label{perturbFeas}
Let $P$ denote the Birkhoff polytope (represented with the final, linearly dependent, equality constraint omitted), and let $P'$ denote the perturbed polytope (parameterized by $t$) represented as in \RNum{2}. Let $\mathcal{B}$ denote a feasible basis for $P$, with attendant basic feasible solution $\hat{x}$. Then $\exists x'\in P'$ with $|\hat{x}_{ij}-x'_{ij}|\le nt$ for all $i,j$. 
\end{prop}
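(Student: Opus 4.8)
The plan is to sidestep the basis machinery for the \emph{existence} claim and simply write down an explicit feasible point of $P'$. By the Birkhoff--von Neumann Theorem (Thm.~\ref{BirkhoffPf}) the basic feasible solution $\hat{x}$ of $P$ is a permutation matrix $P_\sigma$, so the only information I actually need from the basis $\mathcal{B}$ is the permutation $\sigma$. Since the perturbation singles out column $n$ (whose margin drops from $1$ to $1-nt$, the omitted dependent constraint), I would single out the unique row $r=\sigma^{-1}(n)$ whose lone $1$ already sits in column $n$, and route the entire column-$n$ deficit through that one row.

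Concretely, I would define
\[
x'_{ij}=\begin{cases} 1-nt & (i,j)=(r,n),\\ t & i=r,\ j<n,\\ 1-t & i\neq r,\ j=\sigma(i),\\ 0 & \text{otherwise,}\end{cases}
\]
and then carry out three routine verifications. First, the row sums are $(n-1)t+(1-nt)=1-t$ for row $r$ and $1-t$ for every other row, while each column $j<n$ sums to $t+(1-t)=1$; hence $x'$ satisfies the equality system of \RNum{2}. Second, $x'\ge 0$, the only nontrivial cases being $x'_{rn}=1-nt>0$ and $x'_{rj}=t>0$, both of which use $t\in(0,\tfrac1n)$; thus $x'\in P'$. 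Third, comparing entrywise with $\hat{x}=P_\sigma$, the only nonzero deviations are $|x'_{rn}-1|=nt$, the entries $|x'_{rj}-0|=t$ for $j<n$, and $|x'_{i\sigma(i)}-1|=t$ for $i\neq r$, each at most $nt$ — exactly the claimed bound, and tight at $(r,n)$.

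The main obstacle is feasibility, not the size of the deviation. The tempting move --- keep the basis $\mathcal{B}$ and read off the perturbed basic solution with the same support --- fails precisely because $\hat{x}$ is a \emph{degenerate} vertex: of the $2n-1$ basic variables, $n-1$ equal $0$, and under the perturbation these can go negative, exactly as exhibited in Ex.~\ref{exPerturbedInfeas}. What makes the construction above work is the observation that column $n$ must shed $nt$ units of mass yet initially carries mass only in row $r$; concentrating all of the adjustment in that single row lets column $n$ absorb its full deficit at the one cell $(r,n)$, while each remaining row loses its $t$ locally at its own permutation entry, so no cell is ever pushed below zero. As a consistency check I would note that the support of $x'$ --- a star at row $r$ together with the $n-1$ leftover permutation edges --- is a spanning tree of the bipartite cell graph, so $x'$ is in fact a basic feasible solution of $P'$ for a basis that \emph{also} represents the vertex $\hat{x}$ in $P$. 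Read through that basis, the bound $|\hat{x}_{ij}-x'_{ij}|\le nt$ re-derives from the Cramer/determinant estimate of Prop.~\ref{basicSolnDist}, since here $\sum_k|\gamma_k|=nt$; this confirms that the explicit construction and the determinant bound agree, while only the explicit construction also guarantees non-negativity.
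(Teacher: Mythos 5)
Your construction is exactly the paper's: the paper likewise takes $i^*$ with $\hat{x}_{i^*n}=1$ (your $r=\sigma^{-1}(n)$), sets $x'_{i^*n}=1-nt$, $x'_{i^*j}=t$ for $j<n$, and $x'_{ij}=1-t$ at the remaining permutation entries, then notes $x'\in P'$ and $|\hat{x}_{ij}-x'_{ij}|\le nt$. Your proof is correct and in fact more careful than the paper's terse ``trivially''-style verification (explicitly checking the margins, using $t\in(0,\tfrac1n)$ for nonnegativity, and observing that the support is a spanning tree so $x'$ is itself basic feasible), but it is essentially the same argument.
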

\begin{proof}
By Thm. \ref{BirkhoffPf}, for each $i=1,\dots,n$ $\hat{x}_{ij}=1$ for exactly one $j$, and 0 otherwise. Similarly, for each $j=1,\ldots,n$, ${\hat{x}_{ij}=1}$ for exactly one $i$ and 0 otherwise. Let $i^*$ be such that ${\hat{x}_{i^*n}=1}$. For $i\neq i^*$ define $x'_{ij}=\begin{cases} 1-t, &\text{if } \hat{x}_{ij}=1\\ 0,&\text{otherwise}\end{cases}$ and for $i=i^*$ define $x'_{ij}=\begin{cases} 1-nt,&\text{if } j=n\\ t,&\text{otherwise}\end{cases}$. 
\\Then $x'\in P'$ and trivially $|\hat{x}_{ij}-x'_{ij}|\le nt \text{ }\forall i,j$.
\end{proof}

Using the preceding sensitivity analysis, we are prepared for the main result of this section, which is a bound on the value of $t$ so that an optimal solution to the perturbed problem is also an optimal solution to the original problem.  Note that for $x\in \RR^{n\times n}$ we define $\norm{x}$ consistently with Cor. \ref{normCorr}\footnote{I.e. $\norm{x}$ denotes $(\sum_{i=1}^n \sum_{j=1}^n (x_{ij})^2)^\frac{1}{2}$.}.

\begin{thm}[Optimal Basis Theorem]
\label{thm:optBasisThm}
Let $P$ denote the Birkhoff polytope(represented with the final, linearly dependent, equality constraint omitted), and let $P'$ denote the perturbed polytope (parameterized by $t$) represented as in \RNum{2}. Let $t<\frac{\delta}{n^(2n-1)}$ where $\delta \in (0,1)$ satisfies $|f(x)-f(y)|<\frac{1}{2}$ if $x$ is a vertex of $P$ and $\norm{x-y}<\delta$. Furthermore, assume that $f$ is continuous, quasi-convex and integer valued\footnote{This is done without loss of generality as long as $f$ is rational-valued on $vert(P)$.} on $vert(P)$. Let $\mathcal{B}^*$ be a basis such that $f$ is maximized (over $P'$) at the attendant basic feasible solution $x^*$. Then $\mathcal{B}^*$ is also an optimal basis for $P$, i.e. $f(x)$ is maximized over $P$ at $\hat{x}$, where $\hat{x}$ is the basic feasible solution corresponding to $\mathcal{B}^*$.
\end{thm}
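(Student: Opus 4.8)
The plan is to run a two-sided sandwich estimate and then close it with an integrality argument. Writing $f^* := \max_{x\in P} f(x)$, I would show on one side that the basic solution $\hat x$ of $P$ sharing the basis $\mathcal{B}^*$ is a genuine \emph{vertex} of $P$ whose value $f(\hat x)$ differs from $f(x^*)$ by less than $\tfrac12$, and on the other side that the perturbed optimum $f(x^*)$ cannot drop more than $\tfrac12$ below $f^*$. Chaining these two estimates gives $f(\hat x) > f^* - 1$, and since $f$ is integer valued on $vert(P)$ while $f(\hat x)\le f^*$, this strict inequality is forced up to the exact equality $f(\hat x) = f^*$, which is the claim.

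First I would pin down the perturbation. Comparing \RNum{1} with \RNum{2}, the right-hand side changes only in the $n$ row-sum constraints (from $1$ to $1-t$), so the perturbation vector $\gamma$ has $n$ coordinates equal to $t$ and $n-1$ equal to $0$, giving $\sum_k |\gamma_k| = nt$. The constraint matrix is totally unimodular of full row rank $m = 2n-1$, so $|\det B|\ge 1$ and the relevant minors lie in $\{-1,0,1\}$; the inequality established inside the proof of Prop. \ref{basicSolnDist} then yields $|\hat x_{ij} - x^*_{ij}| \le \tfrac{1}{|\det B|}\sum_k|\gamma_k| \le nt$ for every $i,j$. Since $\hat x$ and $x^*$ share the basis $\mathcal{B}^*$, they can differ only in the at most $2n-1$ basic coordinates, so $\norm{\hat x - x^*}\le \sqrt{2n-1}\,nt < \delta$ by the hypothesis $t < \delta/n^{2n-1}$.

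The crux of the argument, and the step I expect to be the main obstacle, is showing that $\hat x$ is actually \emph{feasible} for $P$ rather than merely a basic solution; this is precisely where Ex. \ref{exPerturbedInfeas} cautions that a basis need not transfer between $P$ and $P'$, which is why the argument must run from $P'$ back to $P$ and not the reverse. Here I would invoke Prop. \ref{infeasNegative}: if $\hat x$ were infeasible then some coordinate would satisfy $\hat x_{ij}\le -1$, yet $\hat x_{ij}\ge x^*_{ij} - nt \ge -nt > -1$ because $x^*\in P'$ forces $x^*_{ij}\ge 0$ and $nt < 1$. Hence $\hat x$ is feasible, so it is a vertex of $P$ and, by Thm. \ref{BirkhoffPf}, a permutation matrix; in particular $\hat x\in vert(P)$, and since $\norm{\hat x - x^*}<\delta$ the modulus-of-continuity hypothesis gives $|f(\hat x) - f(x^*)| < \tfrac12$.

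It remains to bound $f(x^*)$ from below. Because $f$ is quasi-convex, its maximum over $P$ is attained at an extreme point; let $x^P$ be such a vertex, so $f(x^P) = f^*$. By Prop. \ref{perturbFeas} there is a point $y\in P'$ with $|x^P_{ij} - y_{ij}|\le nt$ for all $i,j$, hence $\norm{x^P - y}\le n^2 t < \delta$ and so $|f(x^P) - f(y)| < \tfrac12$. Since $x^*$ maximizes $f$ over $P'$ and $y\in P'$,
\begin{equation*}
f(\hat x) > f(x^*) - \tfrac12 \ge f(y) - \tfrac12 > f(x^P) - 1 = f^* - 1 .
\end{equation*}
Finally $\hat x\in P$ gives $f(\hat x)\le f^*$, and as $f(\hat x)$ and $f^*$ are both integers the bounds $f^* - 1 < f(\hat x)\le f^*$ force $f(\hat x) = f^*$. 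Thus $f$ is maximized over $P$ at $\hat x$ and $\mathcal{B}^*$ is an optimal basis for $P$, as claimed.
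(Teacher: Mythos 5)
Your proposal is correct and follows essentially the same route as the paper's proof: feasibility of $\hat{x}$ via Prop.~\ref{infeasNegative} combined with the coordinate-wise perturbation bound, the two $\tfrac12$-estimates via Prop.~\ref{basicSolnDist} and Prop.~\ref{perturbFeas}, and integrality of $f$ on $vert(P)$ to close the gap. The only differences are cosmetic --- you compare against a single maximizing vertex $x^P$ where the paper quantifies over an arbitrary basic feasible solution $x'$, and your norm bounds ($\sqrt{2n-1}\,nt$ and $n^2 t$) are slightly cruder than the paper's $\Gamma$-bookkeeping but still suffice under $t<\delta/(n(2n-1))$.
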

\begin{proof}
Since $f$ is quasi-convex it suffices to show that $f(\hat{x})=max_{x\in vert(P)} f(x)$. Since $\delta<1$ by Prop. \ref{basicSolnDist} we have $|x^*_{ij}-\hat{x}_{ij}|<1 \text{ }\forall i,j$\footnote{Here we take $m=2n-1, \Gamma=\frac{\delta}{n}$.}. Since $x^*_{ij}\ge 0 \forall i,j$ this implies by Prop. \ref{infeasNegative} that $\hat{x}_{ij}>-1 \text{ }\forall i,j$ and hence (since $\hat{x}$ is a basic solution) that $\hat{x}\in P$ ($\hat{x}$ is a basic feasible solution).
Now let $x'$ be an arbitrary basic feasible solution of $P$, then by Prop. \ref{perturbFeas} $\exists x''\in P'$ such that $|x'_{ij}-x''_{ij}|\le n\frac{\delta}{n(2n-1)}=\frac{\delta}{(2n-1)}$ $\forall i,j$. This implies $\norm{x'-x''}^2\le (2n-1)\frac{\delta^2}{(2n-1)^2}<\delta$. Thus $|f(x')-f(x'')|<\frac{1}{2}$.
Also, by Prop. \ref{basicSolnDist} and Cor. \ref{normCorr} we have $|f(x^*)-f(\hat{x})|<\frac{1}{2}$. Finally, by the optimality of $x^*$ (over $P'$) we have $f(x^*)\ge f(x'')$ so
\begin{equation*}
\begin{array}{lcl}
f(x')-f(\hat{x})&<&f(x'')+\frac{1}{2}-(f(x^*)-\frac{1}{2})\\
&=&f(x'')-f(x^*)+1\\
&\le&1
\end{array}
\end{equation*}
Thus $f(x')-f(\hat{x})<1$. But since $f(x'),f(\hat{x})\in \ZZ$, this implies that $f(x')\le f(\hat{x})$. Thus $\mathcal{B}^*$ is an optimal basis for $P$.
\end{proof}

In practice, finding an exact solution of \RNum{1} (via solving \RNum{2}) may be too costly computationally.  Instead we may prefer to terminate an optimization algorithm for solving \RNum{2} before it converges in order to simply bound the optimality gap of a pre-existing feasible solution to \RNum{1}.  The following corollary establishes that an upper bound on the optimal value of \RNum{2} can easily be rounded to an upper bound of the optimal value of \RNum{1}.  

\begin{cor}
\label{integerBound}
Let $P$ denote the Birkhoff polytope, and let $f(x)$ be a continuous convex function that is integer valued on the vertices of $P$.  And let $\delta\in (0,1)$ satisfy $|f(x)-f(y)|<\frac{1}{2}$ if $x$ is a vertex of $P$ and $\norm{x-y}<\delta$.  Let $P'$ denote the perturbed polytope of \RNum{2} with $t\in (0, \frac{\delta}{n^(2n-1)})$.  Then any integer upper bound of $max_{x\in vert(P')} f(x)$ is a valid upper bound of $max_{v\in vert(P)} f(x)$.
\end{cor}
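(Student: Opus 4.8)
The plan is to reduce the statement to the value-gap estimate already established inside the proof of Thm.~\ref{thm:optBasisThm}, followed by a one-line integer-rounding argument. Write $M_P := \max_{x\in vert(P)} f(x)$ and $M_{P'} := \max_{x\in vert(P')} f(x)$; the claim is that every integer $U$ with $U \ge M_{P'}$ also satisfies $U \ge M_P$. I will obtain this from exactly two facts: (i) $M_P \in \ZZ$, and (ii) $|M_P - M_{P'}| < \tfrac12$. Granting these, the conclusion is immediate, since $M_P < M_{P'} + \tfrac12 \le U + \tfrac12$ forces $M_P \le U$ once we use that $M_P$ and $U$ are both integers.

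Fact (i) is essentially free: $vert(P)$ is a finite set and $f$ is integer-valued on it, so $M_P$ is the maximum of finitely many integers and hence lies in $\ZZ$. I will also record that, because $f$ is convex (hence quasi-convex), the hypotheses of Thm.~\ref{thm:optBasisThm} are satisfied and the maxima of $f$ over $P$ and over $P'$ are attained at vertices; thus the ``$\max$ over $P$'' language of that theorem coincides with the ``$\max$ over $vert$'' quantities $M_P, M_{P'}$ appearing here.

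For fact (ii) I will invoke Thm.~\ref{thm:optBasisThm} together with the internal estimate driving its proof. Let $x^*$ be a vertex of $P'$ attaining $M_{P'}$, let $\mathcal{B}^*$ be its basis, and let $\hat{x}$ be the basic feasible solution of $P$ associated with $\mathcal{B}^*$; the theorem guarantees that $\hat{x}$ is an optimal vertex of $P$, so $f(\hat{x}) = M_P$. Taking $m = 2n-1$ and $\Gamma = \tfrac{\delta}{n}$ as in that proof, Prop.~\ref{basicSolnDist} and Cor.~\ref{normCorr} yield $\norm{x^*-\hat{x}}^2 < m\Gamma^2 = \tfrac{(2n-1)\delta^2}{n^2} \le \delta^2$, so $\norm{x^*-\hat{x}} < \delta$. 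Since $\hat{x}$ is a vertex of $P$, the defining property of $\delta$ then gives $|f(x^*) - f(\hat{x})| < \tfrac12$, i.e.\ $|M_{P'} - M_P| < \tfrac12$.

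Combining (i) and (ii) closes the argument as above. I do not expect a genuine obstacle: the substantive content is already contained in Thm.~\ref{thm:optBasisThm}, and what remains is bookkeeping plus the elementary observation that an integer lying strictly below $U + \tfrac12$ is at most $U$. The only point demanding care is to keep the two optima paired through the shared basis $\mathcal{B}^*$, so that the bound $|M_{P'} - M_P| < \tfrac12$ compares the two \emph{global} maxima rather than two arbitrary feasible points; this is exactly what Thm.~\ref{thm:optBasisThm} provides.
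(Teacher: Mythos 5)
Your proposal is correct and matches the paper's intended derivation: the paper states Cor.~\ref{integerBound} without a separate proof precisely because it follows, as you show, from Thm.~\ref{thm:optBasisThm} pairing the two optima through the shared basis $\mathcal{B}^*$ (with the estimate $\norm{x^*-\hat{x}}^2 < (2n-1)\delta^2/n^2 \le \delta^2$ giving $|M_{P'}-M_P|<\tfrac12$) plus integer rounding. Your care in checking that $\hat{x}$ is a vertex of $P$ (so the $\delta$-condition applies) and that $(2n-1)/n^2\le 1$ is exactly the bookkeeping the paper leaves implicit.
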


Given any simplicial algorithm for maximizing a convex function over a polytope that outputs a successively tight sequence of upper bounds for the true optimal value, we can bound the optimality gap of a feasible solution of \RNum{1} as follows:  first transform \RNum{1} to \RNum{2}, run the simplicial algorithm for $k$ iterations to get an upper bound of the optimal value of \RNum{2}, finally round this upper bound up to the next largest integer, this is an upper bound for the optimal value of \RNum{1}.  Both Tuy's method \cite{Tuy} and the Falk-Hoffman method \cite{FalkHoffman} output such a sequence of successively tight upper bounds.  In particular, it is this property (in conjunction with Cor. \ref{integerBound}) that makes these algorithms attractive choices to empirically bound the optimality gap of a feasible solution to \RNum{1}.

\section{An Application to Graph Matching}
In the Graph Matching Problem (GM) one is given two simple, undirected, graphs of $n$ vertices $G_1 = (V,E^1)$ and $G_2 = (V,E^2)$ the objective is to relabel the vertices in $G_1$ according to some permutation $\sigma:\{1,\ldots,n\}\rightarrow \{1,\ldots,n\}$ such that \emph{after relabeling} the size of the symmetric difference of the edge sets $|E^1\triangle E^2|$ is minimized.  Currently there is no known polynomial time algorithm known for determining whether or not $G_1$ and $G_2$ are isomorphic, and so there is no known polynomial algorithm for solving the optimization problem GM in the general case\footnote{There are however, polynomial algorithms to solve GM in the case that $G_1$ and $G_2$ are trees or planar graphs.}.

Notice that there is a natural bijection between permutations of $\{1,\ldots,n\}$ and $n\times n$ permutation matrices, and that $|E^1\triangle E^2|$ is minimized (after applying the permutation $\sigma$) if and only if the permutation matrix $x=(x_{ij})$ corresponding to $\sigma$ minimizes the Froebenius norm $\norm{E^1x-xE^2}_{F}^2$ (which counts the number of adjacency disagreements induced by $\sigma$).  (Here $E^1$ and $E^2$, in a slight abuse of notation, represent the adjacency matrices of graphs $G_1$ and $G_2$ respectively.)  Thus (GM) can be formulated as the minimization of $\norm{E^1x-xE^2}_{F}^2$ over all $n\times n$ matrices $x$, subject to linear constraints and integrality of the decision variables to ensure that the solution corresponds to a permutation matrix\footnote{See \cite{GM} for a comprehensive survey of GM and the ``classical'' formulation of GM as an optimization problem.}.

However, as Lyzinski et al. \cite{LyzinskiPNAS} observe\footnote{Specifically, Lyzinksi et al. observed that minimizing $\norm{E^1x-xE^2}_{F}^2$ over all $n\times n$ permutation matrices $x$ is the same as minimizing $-\tr (((E^1x)^T E^2x)$.  And for any square matrices $U$ and $V$ of the same dimension, $U\bullet V := \tr ((U^T V)$.}, (GM) can also be formulated as the maximization of a (typically indefinite) quadratic function over a discrete set of points as follows.  
\begin{equation*}
\begin{array}{llll}
& \text{maximize} & E^1x\bullet xE^2 &\\
\RNum{3} &\text{subject to} &\displaystyle\sum\limits_{j=1}^{n} x_{ij} = 1, &i=1,\ldots,n\\
& &\displaystyle\sum\limits_{i=1}^{n} x_{ij} = 1, &j=1,\ldots,n\\
& & x_{ij}\in \{0,1\} & i=1,\ldots,n, j=1,\ldots,n
\end{array}
\end{equation*}
Here the decision variables are $x_{ij} =\begin{cases} 1 &\mbox{if } \sigma(i)=j\\ 0 &\mbox{otherwise} \end{cases}$. 

Note that $E^1x\bullet xE^2$ is a quadratic form in $x$ and that $E^1$ and $E^2$ are real matrices.  Therefore $E^1x\bullet xE^2=\mathbf x^TQ\mathbf x$ where $\mathbf x=\verb|vec|(x)$ and $Q$ is a real symmetric matrix.  This leads to the following reformulation of GM as the maximization of a continuous convex function over a polytope using an observation of \cite{Sherali}.

\begin{prop}
\label{convexMaxFormulation}
Let $\lambda=max_i\{|\lambda^i| | \lambda^i \text{is an eigenvalue of }Q\}$.  For any $\mu>\lambda$, $x^*$ solves \RNum{4} if and only if $x^*$ solves \RNum{3}.
\begin{equation*}
\begin{array}{llll}
& \text{maximize} & \mathbf x^T[Q+\mu I]\mathbf x &\\
\RNum{4} &\text{subject to} &\displaystyle\sum\limits_{j=1}^{n} x_{ij} = 1, &i=1,\ldots,n\\
& &\displaystyle\sum\limits_{i=1}^{n} x_{ij} = 1, &j=1,\ldots,n\\
& & x_{ij}\ge 0 & i=1,\ldots,n, j=1,\ldots,n
\end{array}
\end{equation*}
\end{prop}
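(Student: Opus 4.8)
The plan is to exploit three facts in sequence: that the objective of \RNum{4} is strictly convex when $\mu>\lambda$, that a strictly convex function attains its maximum over a polytope only at vertices, and that the vertices of the feasible polytope of \RNum{4} are exactly the feasible points of \RNum{3}. First I would establish the convexity. Since $Q$ is real symmetric, the eigenvalues of $Q+\mu I$ are exactly the numbers $\lambda^i+\mu$ as $\lambda^i$ ranges over the eigenvalues of $Q$. Because $\mu>\lambda=\max_i|\lambda^i|\ge -\lambda^i$ for every $i$, each such eigenvalue satisfies $\lambda^i+\mu>0$, so $Q+\mu I$ is positive definite and $\mathbf{x}^T[Q+\mu I]\mathbf{x}$ is \emph{strictly} convex. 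It is precisely here that the strict inequality $\mu>\lambda$, rather than $\mu\ge\lambda$, is needed.

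Next I would argue that every optimal solution of \RNum{4} is a vertex of its feasible polytope, and hence a permutation matrix. The feasible region of \RNum{4} is the Birkhoff polytope, so by Thm.~\ref{BirkhoffPf} its vertices are precisely the $n\times n$ permutation matrices. If some optimal $x^*$ of \RNum{4} were not a vertex, then $\mathbf{x}^*=\tfrac{1}{2}(\mathbf{y}+\mathbf{z})$ for distinct feasible points $\mathbf{y}\neq\mathbf{z}$, and strict convexity would give $(\mathbf{x}^*)^T[Q+\mu I]\mathbf{x}^* < \max\{\mathbf{y}^T[Q+\mu I]\mathbf{y},\ \mathbf{z}^T[Q+\mu I]\mathbf{z}\}$, contradicting optimality. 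Thus every solution of \RNum{4} is a permutation matrix.

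Finally I would compare the two objectives on permutation matrices. For any permutation matrix $x$ we have $\mathbf{x}^T\mathbf{x}=\norm{x}_F^2=n$, since $x$ has exactly $n$ entries equal to $1$; hence $\mathbf{x}^T[Q+\mu I]\mathbf{x}=\mathbf{x}^TQ\mathbf{x}+\mu n$. So over the set of permutation matrices the two objectives differ by the additive constant $\mu n$ and therefore share the same set of maximizers. Putting the pieces together: a solution of \RNum{4} is a permutation matrix maximizing $\mathbf{x}^T[Q+\mu I]\mathbf{x}$, equivalently $\mathbf{x}^TQ\mathbf{x}$, over all permutation matrices, i.e.\ a solution of \RNum{3}; conversely, a solution $x^*$ of \RNum{3} maximizes $\mathbf{x}^T[Q+\mu I]\mathbf{x}$ over all vertices, and since a convex function attains its maximum over a polytope at a vertex, $x^*$ attains the maximum over the entire Birkhoff polytope and so solves \RNum{4}.

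I expect the main obstacle to be the ``only if'' direction, namely ruling out fractional (non-vertex) optima of \RNum{4}. A merely convex objective could be maximized along an entire face of the polytope, producing optimizers of \RNum{4} that are not permutation matrices and hence not solutions of \RNum{3}; the strict convexity secured by $\mu>\lambda$ is exactly what forces every optimizer to be a vertex, and thus a genuine feasible point of \RNum{3}.
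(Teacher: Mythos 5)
Your proof is correct and takes essentially the same route as the paper's: strict convexity of $\mathbf x^T[Q+\mu I]\mathbf x$ forces every maximizer of \RNum{4} to be a vertex of the Birkhoff polytope (a permutation matrix, by Thm.~\ref{BirkhoffPf}), the two objectives differ by the additive constant $\mu n$ on vertices, and the converse uses that a convex function attains its maximum over a polytope at a vertex. You simply make explicit two details the paper leaves implicit, namely the eigenvalue verification that $Q+\mu I$ is positive definite and the midpoint argument ruling out non-vertex optima.
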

\begin{proof}
Let $f(x)=\mathbf x^T[Q+\mu I] \mathbf x$ and let $P$ denote the feasible region of \RNum{4}.  Observe that $f$ is strictly convex and $P$ is a polytope, and so every maximizer of $f$ is a vertex of $P$.  Furthermore by Thm. \ref{BirkhoffPf} for any pair of vertices $x\text{ and } x'$ of $P$, both $x\text{ and } x'$ are binary-valued with $\sum_{i,j} x_{ij} = \sum_{i,j} x'_{i,j} = n$, and thus $\mu \mathbf x^TI \mathbf x = \mu \mathbf x'^TI \mathbf x'$.  Hence $x^*$ solves \RNum{4} implies ${x^*\in argmax_{x\in vert(P)} \mathbf x^TQ \mathbf x}$.  Since the feasible solutions of \RNum{3} are exactly the set of $n\times n$ permutation matrices ($=vert(P)$ by Thm. \ref{BirkhoffPf}) and since $E^1x\bullet xE^2 = \mathbf x^T Q\mathbf x$, this means that $x^*$ solves \RNum{3} as well.

Conversely, if $x^*$ solves \RNum{3}, then 
\begin{equation*}
\begin{array}{lll}
x^* &\in &argmax_{x\in vert(P)} \mathbf x^TQ \mathbf x\\
&= &argmax_{x\in vert(P)}\mathbf x^TQ \mathbf x + \mu n\\
& = &argmax_{x\in vert(P)}f(x)\\
&=&argmax_{x\in P}f(x)
\end{array}
\end{equation*}
by the convexity of $f$.  So $x^*$ solves \RNum{4}.
\end{proof}

Form \RNum{4} is a popular way of formulating (GM) (see \cite{MacielThesis}, \cite{MacielIEEE}, \cite{zLiu}) because the resulting problem is a convex maximization problem.  Although transforming (GM) into a type of problem that is NP-hard in general seems like an unusual tactic, it is expedient because convex maximization problems possess enough structure that there are specialized algorithms to solve them (such as the algorithms of Tuy and Falk and Hoffman mentioned previously), and at present there is no known efficient algorithm for (GM) in general.  Practically, the choice to treat (GM) as a convex maximization problem is justified by the good empirical performance of convex maximization algorithms on many classes of small and mid-sized problems \cite{FalkHoffman2}.  Of particular relevance are the numerical results in \cite{MacielThesis} \cite{MacielIEEE} which show that the convex maximization approach works well for large graph matching problems arising in the context of image correspondence.

As mentioned earlier, a potential difficulty arises in that many popular convex maximization algorithms assume that the feasible region is a non-degenerate polytope, which is not true for \RNum{4}.  Maciel was the first to raise this ``open issue'' in \cite{MacielThesis} but the solution therein seems only to apply when solving \RNum{4} with the Cabot-Francis method \cite{cabotFrancis}.  Although convex maximization algorithms can sometimes be adapted to handle degeneracy, doing so may compromise the good empirical performance which made convex maximization an appealing solution technique in the first place\footnote{Here we refer to the need to maintain a branching structure to modify Tuy's method for the degenerate case \cite{Meyer}, or the loss of finite convergence when adapting the method of Falk and Hoffman \cite{Hoffman}.}.

Alternately \RNum{4} could be perturbed so the feasible polytope is non-degenerate, in which case we would like an a priori guarantee that the perturbation is sufficiently small so that an optimal solution of the perturbed problem corresponds to an optimal solution of the original problem.  As Ex. \ref{exPerturbedDiffOpt} shows, this is not always obvious.  However, since \RNum{4} is of form \RNum{1}, it can be perturbed into an equivalent surrogate problem of form \RNum{2}.  An elementary linear algebra argument provides the bound on $\delta$ required by Thm. \ref{thm:optBasisThm}.

\begin{prop}
\label{deltaBd}
Let $f(x) = \mathbf x^T[Q+\mu I] \mathbf x$, where $Q$ and $\mu$ are defined as in Prop. \ref{convexMaxFormulation}, and let $P$ be the Birkhoff polytope. If $x$ is a vertex of $P$ and $\norm{x-y} < min(1,\frac{1}{4\mu(2\sqrt{n}+1)})$ then $|f(x)-f(y)|<\frac{1}{2}$.
\end{prop}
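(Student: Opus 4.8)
The plan is to reduce the claim to an elementary operator-norm estimate after writing the difference of the two quadratic forms as a single bilinear expression. Set $M = Q + \mu I$, so that $f(x) = \mathbf{x}^T M \mathbf{x}$ with $M$ real symmetric. First I would record two facts about $M$. Since every eigenvalue $\lambda^i$ of $Q$ satisfies $|\lambda^i| \le \lambda < \mu$ by the definition of $\lambda$ and $\mu$ in Prop. \ref{convexMaxFormulation}, each eigenvalue $\lambda^i + \mu$ of $M$ lies in $(0, 2\mu)$; hence $M$ is positive definite and its spectral (operator) norm satisfies $\norm{M}_{\mathrm{op}} = \lambda_{\max}(Q) + \mu < 2\mu$.

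Next, writing $\mathbf{d} = \mathbf{x} - \mathbf{y}$ and using the symmetry of $M$, I would verify the algebraic identity
\[
f(x) - f(y) = \mathbf{x}^T M \mathbf{x} - \mathbf{y}^T M \mathbf{y} = \mathbf{d}^T M (\mathbf{x} + \mathbf{y}),
\]
which follows by telescoping $\mathbf{x}^T M \mathbf{x} - \mathbf{y}^T M \mathbf{y} = \mathbf{x}^T M(\mathbf{x}-\mathbf{y}) + (\mathbf{x}-\mathbf{y})^T M \mathbf{y}$ together with $\mathbf{x}^T M \mathbf{y} = \mathbf{y}^T M \mathbf{x}$. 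Applying Cauchy--Schwarz and submultiplicativity of the operator norm then gives
\[
|f(x) - f(y)| \le \norm{\mathbf{d}} \, \norm{M}_{\mathrm{op}} \, \norm{\mathbf{x} + \mathbf{y}}.
\]

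The remaining step is to bound the two norms on the right. Because $x$ is a vertex of $P$, by the Birkhoff--von Neumann Theorem (Thm. \ref{BirkhoffPf}) it is a permutation matrix, hence has exactly $n$ unit entries and $\norm{\mathbf{x}} = \norm{x} = \sqrt{n}$. Combining this with the triangle inequality $\norm{\mathbf{y}} \le \norm{\mathbf{x}} + \norm{\mathbf{d}}$ yields $\norm{\mathbf{x} + \mathbf{y}} \le 2\sqrt{n} + \norm{\mathbf{d}} < 2\sqrt{n} + 1$, using $\norm{\mathbf{d}} = \norm{x-y} < 1$. Substituting the three estimates $\norm{M}_{\mathrm{op}} < 2\mu$, $\norm{\mathbf{x}+\mathbf{y}} < 2\sqrt{n}+1$, and $\norm{\mathbf{d}} < \frac{1}{4\mu(2\sqrt{n}+1)}$ into the displayed bound collapses the product to $\frac{2\mu(2\sqrt{n}+1)}{4\mu(2\sqrt{n}+1)} = \frac{1}{2}$, giving $|f(x)-f(y)| < \frac{1}{2}$ as claimed.

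I do not expect any serious obstacle; the only points requiring care are (i) confirming strictness of $\norm{M}_{\mathrm{op}} < 2\mu$, which relies on the strict inequality $\mu > \lambda$ supplied by Prop. \ref{convexMaxFormulation}, and (ii) correctly identifying the vertex norm as $\sqrt{n}$ rather than $n$, since it is precisely the $\min(1,\cdot)$ in the hypothesis that furnishes the $+1$ slack in $\norm{\mathbf{x}+\mathbf{y}} < 2\sqrt{n}+1$. Everything else is routine bookkeeping.
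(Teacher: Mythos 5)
Your proof is correct and follows essentially the same route as the paper: both arguments bound the difference of quadratic forms via Cauchy--Schwarz, the spectral bound $2\mu$ on $Q+\mu I$, the vertex norm $\norm{\mathbf x}=\sqrt n$, and the $\min(1,\cdot)$ cap, arriving at the identical intermediate estimate $\norm{\mathbf d}\,2\mu(2\sqrt n+1)$. The only cosmetic difference is that you use the symmetric factorization $f(x)-f(y)=\mathbf d^TM(\mathbf x+\mathbf y)$ where the paper expands $f(y)-f(x)=2\mathbf x^T\bar Q\mathbf d+\mathbf d^T\bar Q\mathbf d$, which is the same decomposition up to trivial rearrangement (with the $+1$ slack entering through $\norm{\mathbf y}\le\sqrt n+\norm{\mathbf d}$ in your version rather than $\norm{\mathbf d}^2\le\norm{\mathbf d}$ in the paper's).
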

\begin{proof}
Let $\bar{Q}$ denote $[Q+\mu I]$, and observe that $\bar{Q}$ is a real symmetric matrix, and thus has an orthonormal eigenbasis. Also note that $\bar{Q}$ has its largest eigenvalue (in absolute value) bounded by $2\mu$. Let $\mathbf d = \mathbf y- \mathbf x$ (here \textbf{bold} font represents the vectorization operator as before), then $|f(y)-f(x)|\le 2|\mathbf x^T\bar{Q}\mathbf d| + 2\mu \norm{\mathbf d}^2$. By Cauchy-Schwartz $|\mathbf x^T\bar{Q}\mathbf d|\le \sqrt{n}2\mu \norm{\mathbf d}$ as $\norm{\mathbf x}^2 = n$ for every vertex of $P$. Assuming that $\norm{\mathbf d} \le 1$ $|f(y)-f(x)|$ has an upper bound of $\norm{\mathbf d}(2\mu (2\sqrt{n} +1)$.  Thus for $\norm{x-y}=\norm{\mathbf d}<min(1,\frac{1}{4\mu(2\sqrt{n}+1)})$ we have $|f(x)-f(y)|<\frac{1}{2}$ as desired.
\end{proof}

\section{Conclusions}
In this note we considered the problem of maximizing a continuous convex function over the assignment polytope.  We discussed how such problems could be solved by simplicial algorithms (such as Tuy's method \cite{Tuy} and the Falk-Hoffman method \cite{FalkHoffman}) by forming an equivalent (in the sense we defined the term earlier), non-degenerate, ``surrogate'' problem using Liu's perturbation \cite{Liu}. 

While it is clear that for some sufficiently small perturbation the resulting surrogate problem is both non-degenerate and equivalent there remains a practical issue of not making the perturbation so small so as to invite numerical precision errors or to vastly increase the encoding size of the problem.  Our primary contribution (Thm. \ref{thm:optBasisThm}) was to determine a bound on the size of the perturbation such that all perturbations smaller than this are guaranteed to yield surrogate problems that are both non-degenerate and equivalent.  

We further provide an explicit construction of a surrogate problem that is non-degenerate and combinatorially equivalent to the Graph Matching problem, when the latter is posed as a convex maximization problem (such as in \cite{MacielThesis}, \cite{MacielIEEE}, \cite{zLiu}).  By constructing a surrogate problem that is known a priori to be non-degenerate and combinatorially equivalent to Graph Matching we resolve an ``open issue'' of solving Graph Matching via convex maximization first raised by Maciel \cite{MacielThesis}. In addition, our Cor. \ref{integerBound} shows how any algorithm that produces a successively tight sequence of upper bounds to the optimal value of the surrogate problem (e.g. \cite{Tuy}, \cite{FalkHoffman}, \cite{FalkHoffman2}, \cite{Hoffman}, \cite{Meyer}) can be run for a prescribed number of iterations to generate an empirical bound on the optimality gap for a known feasible solution to (GM).  

Some obvious questions remain unanswered, such as whether our bound on the size of the permutation is tight, and if so, when.  In addition, although our result applies specifically to the assignment polytope, the non-degenerate perturbations of Liu \cite{Liu} and Orden \cite{Orden} apply to the more general class of transportation polytopes.  It seems likely that a modification of our approach could be used to extend our result to all transportation polytopes, and we plan to pursue this line of investigation in a future paper.\\
\\
\textbf{Acknowledgements:}
The author thanks Stephen R. Chestnut for many helpful comments which greatly improved the quality of this work.  The author also wishes to thank Eric Harley, Donniell Fishkind, and Vince Lyzinksi for several stimulating discussions which lead to the development of the results herein, in particular with application to Graph Matching.

\bibliographystyle{amsplain}
\bibliography{Transportation_Perturbation_Bib_12_7_2014}
\end{document}